\renewcommand\footnotemark{}
\date{}
\newtheorem{thm}{Theorem}[section]
\newtheorem{cor}[thm]{Corollary}
\newtheorem{lem}[thm]{Lemma}
\newtheorem{prop}[thm]{Proposition}
\theoremstyle{definition}
\newtheorem{rem}[thm]{Remark}
\newtheorem{ex}[thm]{Example}
\numberwithin{equation}{section}
\numberwithin{table}{section}
\numberwithin{figure}{section}
\begin{document}
\title{Self-Conjugate-Reciprocal Irreducible Monic Polynomials over Finite Fields} 
\author{{Arunwan Boripan},  {Somphong Jitman} and {Patanee Udomkavanich}
\thanks{A. Boripan is with the Department of Mathematics and Computer Science, Faculty of Science, Chulalongkorn University, Bangkok  10330, Thailand  (email: boripan-arunwan@hotmail.com).}
\thanks{S. Jitmann  is with the Department of Mathematics, Faculty of Science, Silpakorn University, Nakhon Pathom 73000, Thailand (email: sjitman@gmail.com).}
\thanks{P. Udomkavanich is with the Department of Mathematics and Computer Science, Faculty of Science, Chulalongkorn University, Bangkok  10330, Thailand (email: pattanee.u@chula.ac.th).}  
}

\maketitle 
\vspace{-0.5cm}
\begin{abstract}
 The class of  self-conjugate-reciprocal irreducible monic (SCRIM) polynomials  over finite fields are studied. Necessary and sufficient conditions for monic irreducible polynomials to be SCRIM are given. The number of SCRIM polynomials of a given degree  are also determined.
\end{abstract} 

\vspace{0.5cm}
\noindent{\bf Keywords:}
{
order, degree, SCRIM polynomials
}

\noindent{\bf 2010 Mathematics Subject Classification:}
{
11T55
}

 \section{Introduction}
 A polynomial  $f(x)$ of degree $n$ over a finite field $\mathbb{F}_q$ (with $f(0)\ne 0$) is said to be  \emph{self-reciprocal}   if $f(x)$ equals its  {\em reciprocal polynomial} $f^*(x):=x^nf(0)^{-1}f\left(\frac{1}{x}\right)$. A  polynomial is said to be  {\em self-reciprocal irreducible monic (SRIM)} if it is self-reciprocal, irreducible and monic.  SRIM and self-reciprocal polynomials  over finite fields have been studied and applied in various branches of Mathematics and Engineering. SRIM polynomials were used for characterizing and enumerating Euclidean self-dual cyclic codes over finite fields in \cite{JLX2011}   and for characterizing   Euclidean complementary dual cyclic codes over finite fields in \cite{YM}. In \cite{HB1975},   SRIM polynomials have been  characterized up to their degrees.    The order and the number  of  SRIM polynomials of a given degree  over finite fields have been determined in  \cite{YM2004}.

%
%
 
 In this paper, we focus on a generalization of a SRIM polynomial over finite fields, namely, a self-conjugate-reciprocal irreducible  monic  (SCRIM) polynomial. The {\em conjugate} of a  polynomial $f(x)=\sum_{i=0}^n f_i x^i$ over $\mathbb{F}_{q^2}$ is defined to be  $\overline{f(x)}=   \overline{f_0}+\overline{f_1}x+\dots+\overline{f_n}x^n$, where 
 $\bar{~} : \mathbb{F}_{q^2} \rightarrow  \mathbb{F}_{q^2}$ is defined by $\alpha\mapsto \alpha^{q}$ for all $\alpha\in \mathbb{F}_{q^2}$. A polynomial $f(x)$ over $\mathbb{F}_{q^2}$ (with $f(0)\ne 0$)  is said to be {\em self-conjugate-reciprocal} if $f(x)$  equals its  {\em conjugate-reciprocal polynomial} $f^\dagger(x):= \overline{f^*(x)}$.  
If, in addition, $f(x)$  is monic and irreducible, it is said to be \emph{self-conjugate-reciprocal irreducible monic (SCRIM)}.  SCRIM polynomials have been used for  characterizing Hermitian self-dual cyclic codes in \cite{SJ2014}. However,  properties of SCRIM polynomials have not been well studied. Therefore, it is of natural interest to characterize and to enumerate such polynomials.

\section{Preliminaries}

In this section, basic properties of polynomials that are important tools for studying SCRIM polynomials are recalled.

 Let  $q$ be a prime power and let $n$ be a positive integer such that  $\gcd(n,q)=1.$   Let   ${\rm ord}_{n}(q^2)$ denote  the multitplicative order of $q^2$ modulo $n$.   For each $0\leq i<n$, the \emph{cyclotomic  coset of  $q$ modulo  $n$  containing $i$} is defined to be the set 
\begin{align*}
Cl_q(i)=\{iq^j \, \textrm{mod n} \mid j \in \mathbb{N}_0\}.
\end{align*}
A {\em minimal polynomial} of an element $\alpha \in \mathbb{F}_{q^m}$ with respect to $\mathbb{F}_q$ is a nonzero monic polynomial $f(x)$ of least degree in $\mathbb{F}_q[x]$ such that $f(\alpha)=0$.

\bigskip

\begin{thm}[{\cite[Theorem 3.48]{LC2004}}] \label{TT}
Let $n \in \mathbb{N}$ be such that $\gcd(n,q)=1.$ Let $m \in \mathbb{N}$ satisfying $n\vert (q^m-1)$ and $\alpha$ be a primitive element of $\mathbb{F}_{q^m}$. Then 
\begin{align*}
M^{(i)}_{\mathbb{F}_{q}}(x)=\prod_{j\in Cl_q(i)}(x-\alpha ^j)
\end{align*}
is the minimal polynomial of $\alpha ^i$.
\end{thm}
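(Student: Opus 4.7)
The plan is to verify three things about $M^{(i)}_{\mathbb{F}_q}(x)$: that it is a monic polynomial with coefficients in $\mathbb{F}_q$, that $\alpha^i$ is a root, and that no polynomial of strictly smaller degree in $\mathbb{F}_q[x]$ vanishes at $\alpha^i$. Monicity and the root property are immediate from the defining product, since $i \in Cl_q(i)$ puts the factor $(x-\alpha^i)$ in the expression. The substantive content is the other two items.

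The first key step is to show $M^{(i)}_{\mathbb{F}_q}(x) \in \mathbb{F}_q[x]$. The idea is to exploit the fact that $Cl_q(i)$ is, by its very definition, closed under the map $j \mapsto qj \pmod{n}$. Let $\sigma \colon \mathbb{F}_{q^m} \to \mathbb{F}_{q^m}$ denote the Frobenius $\beta \mapsto \beta^q$, and extend it coefficient-wise to $\mathbb{F}_{q^m}[x]$. Applying $\sigma$ to $M^{(i)}_{\mathbb{F}_q}(x)$ replaces each factor $(x-\alpha^j)$ with $(x-\alpha^{qj})$, and since $\{qj \bmod n : j \in Cl_q(i)\} = Cl_q(i)$, the polynomial is fixed by $\sigma$. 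Because the fixed subfield of $\sigma$ inside $\mathbb{F}_{q^m}$ is precisely $\mathbb{F}_q$, every coefficient lies in $\mathbb{F}_q$.

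For minimality, I would let $g(x) \in \mathbb{F}_q[x]$ denote the true minimal polynomial of $\alpha^i$, so that $g \mid M^{(i)}_{\mathbb{F}_q}$. Conversely, since the coefficients of $g$ are fixed by $\sigma$, applying $\sigma^j$ to the identity $g(\alpha^i)=0$ yields $g(\alpha^{iq^j})=0$ for every $j\ge 0$, so every $\alpha^k$ with $k \in Cl_q(i)$ is a root of $g$. Thus $M^{(i)}_{\mathbb{F}_q} \mid g$, and the two monic polynomials coincide.

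The main subtlety I would watch out for is the distinctness of the roots $\{\alpha^j : j \in Cl_q(i)\}$, which is needed both to guarantee $\deg M^{(i)}_{\mathbb{F}_q} = |Cl_q(i)|$ and to conclude the divisibility $M^{(i)}_{\mathbb{F}_q} \mid g$ from the list of distinct roots of $g$. This is handled by the hypotheses: since $\alpha$ has order $q^m-1$ and $n \mid (q^m-1)$, the elements $\alpha^j$ for $0 \le j < n$ are pairwise distinct, and the representatives of $Cl_q(i)$ are taken in this range by definition. With that observation in place, the argument is complete.
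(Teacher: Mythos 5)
The paper does not prove this theorem at all: it is quoted from \cite[Theorem 3.48]{LC2004}, so there is no internal argument to compare yours against. Your proof is the standard textbook one (Frobenius-invariance of the product to show the coefficients lie in $\mathbb{F}_q$, conjugate-closure of the root set to get minimality and the reverse divisibility), and its overall structure is sound.

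There is, however, one point where your argument silently uses something the stated hypotheses do not give you. When you say that $\sigma$ sends the factor $x-\alpha^j$ to $x-\alpha^{qj}$ and that this factor reappears in the product because $qj \bmod n \in Cl_q(i)$, you are identifying $\alpha^{qj}$ with $\alpha^{qj \bmod n}$; likewise your minimality step passes from $g(\alpha^{iq^j})=0$ to ``every $\alpha^k$ with $k\in Cl_q(i)$ is a root of $g$.'' Both identifications require $\alpha^n=1$. But the theorem as transcribed here takes $\alpha$ to be a primitive element of $\mathbb{F}_{q^m}$, i.e.\ of multiplicative order $q^m-1$, so $\alpha^n=1$ fails whenever $n<q^m-1$, and the literal statement is then false: for $q=2$, $n=3$, $m=4$ one gets $Cl_2(1)=\{1,2\}$ and the product $(x-\alpha)(x-\alpha^2)$ has degree $2$ (and constant term $\alpha^3\notin\mathbb{F}_2$), whereas the minimal polynomial of a primitive element of $\mathbb{F}_{16}$ has degree $4$. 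The element actually intended, both in \cite{LC2004} and in every later use in this paper (e.g.\ Remark~\ref{mini} and Lemma~\ref{thm -qi}), is a primitive $n$-th root of unity, obtained as $\alpha^{(q^m-1)/n}$ from a primitive element $\alpha$. With that reading your proof is complete and correct; the one improvement to make is to state explicitly that you are using $\alpha^n=1$, since that is precisely the point at which the hypothesis ``primitive element'' must be replaced by ``primitive $n$-th root of unity.''
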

\bigskip
\begin{rem} The polynomial $M^{(i)}_{\mathbb{F}_{q}}(x)$ in Theorem {\ref{TT}} will be referred to as the minimal polynomial of $\alpha^i$ defined corresponding to $Cl_q(i)$. 
\end{rem}

The {\em order} of a polynomial $f(x)$, denoted by $\mathrm{ord}(f(x))$,   is defined to be the smallest positive integer $s$ such that $f(x)$ divides $x^s-1$.
\smallskip
\begin{rem}\label{mini}
It is well know that if $f(x)$ is an irreducible polynomial over $\mathbb{F}_q$, then $f(x)|(x^{ord(f(x))}-1)$. Moreover, we have  \[\displaystyle x^{ord(f(x))}-1=\prod_{i=1}^t{M^{(i)}_{\mathbb{F}_{q}}}(x)'\] where $t$ is the cardinality of a complete set of representatives of the cyclotomic cosets of $q$ modulo $ord(f(x))$ {\cite[Theorem 3.48]{LC2004}}. It follows that any irreducible polynomials over $\mathbb{F}_q$ can be viewed as $M^{(i)}_{\mathbb{F}_{q}}(x)$ for some $i$.
 \end{rem}

   The following property of the order   mentioned in \cite{YM2004} and \cite{RH1997} is helpfull. 
\bigskip
\begin{lem}[{\cite[Theorem 3.3]{RH1997}}] \label{rootorder}
If $f(x)$ is an irreducible polynomial of degree $n$ over $\mathbb{F}_{q}$, then $\mathrm{ord}(f(x))$ is the order of any root of $f(x)$ in the multiplicative group $\mathbb{F}^*_{q^{n}}$.
\end{lem}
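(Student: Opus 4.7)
The plan is to prove the two-sided divisibility $s \mid d$ and $d \mid s$, where $s = \mathrm{ord}(f(x))$ and $d$ is the multiplicative order of a fixed root $\alpha$ of $f(x)$ in $\mathbb{F}_{q^n}^*$. Since $f(x)$ is irreducible of degree $n$ over $\mathbb{F}_q$, any root $\alpha$ lies in $\mathbb{F}_{q^n}$ and, as $f(0)\ne 0$ (else $x \mid f(x)$ would contradict irreducibility for $n\geq 2$), we have $\alpha\in\mathbb{F}_{q^n}^*$, so $d$ is well-defined.

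First I would show $d \mid s$. By definition of order of a polynomial, $f(x)\mid x^s-1$, so substituting the root gives $\alpha^s = 1$, and the definition of multiplicative order of $\alpha$ then forces $d \mid s$. Next I would show $s \mid d$. Since $\alpha^d = 1$, the polynomial $x^d-1$ has $\alpha$ as a root. Because $f(x)$ is irreducible over $\mathbb{F}_q$ and $f(\alpha)=0$, $f(x)$ is (an associate of) the minimal polynomial of $\alpha$ over $\mathbb{F}_q$, hence $f(x) \mid x^d-1$. By the minimality built into the definition of $\mathrm{ord}(f(x))$, this yields $s \leq d$; to upgrade to $s \mid d$, one runs the standard Euclidean-division argument: write $d = sk + r$ with $0\leq r < s$, observe $f(x)\mid x^{sk}-1$ (since $x^s-1\mid x^{sk}-1$), and conclude from $f(x)\mid x^d-1$ that $f(x)\mid x^r-1$, forcing $r=0$ by minimality of $s$.

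Combining the two divisibilities gives $s=d$, which is the claim for the chosen root $\alpha$. To extend to \emph{any} root, I would note that the full set of roots of $f(x)$ is the Frobenius orbit $\{\alpha, \alpha^q, \ldots, \alpha^{q^{n-1}}\}$, and all these elements have the same multiplicative order because $\gcd(q, d)=1$ (indeed $d \mid q^n-1$, so $d$ is coprime to $q$), whence each $\alpha^{q^i}$ generates the same cyclic subgroup as $\alpha$.

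I do not expect any real obstacle here: the argument is essentially a clean application of (i) the characterization of $\mathrm{ord}(f(x))$ via divisibility of $x^s-1$, (ii) the fact that an irreducible $f$ acts as the minimal polynomial of each of its roots, and (iii) the Euclidean-division trick used in the standard proof that the order of any element of a group divides any exponent annihilating it. The only conceptual point worth flagging is the verification that the conclusion does not depend on the choice of root, which is handled by the Frobenius-orbit remark above.
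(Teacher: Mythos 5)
Your argument is correct. Note that the paper does not prove this lemma at all: it is quoted verbatim from Lidl--Niederreiter (cited as Theorem 3.3 of \cite{RH1997}), so there is nothing to compare against beyond the textbook proof, which your write-up essentially reproduces. The two divisibilities $d\mid s$ (from $f(x)\mid x^s-1$ evaluated at $\alpha$) and $f(x)\mid x^d-1$ (from $f$ being the minimal polynomial of $\alpha$) already give $s\le d$ and $d\le s$, so your Euclidean-division step, while valid, is not needed to conclude $s=d$; and your Frobenius-orbit remark correctly disposes of the dependence on the choice of root. The only caveat worth recording is the implicit exclusion of $f(x)=x$ (degree $1$ with $f(0)=0$), for which $\mathrm{ord}(f)$ is undefined; you flag $f(0)\ne 0$ but justify it only for $n\ge 2$.
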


%
%
%

\section{Self-Conjugate-Reciprocal  Irreducible Polynomials}

In this section, we study self-conjugate-reciprocal irreducible monic (SCRIM) polynomials over finite fields. Since a SCRIM polynomial is defined over a finite field whose order is a square, for notation simplicity,  we focus on polynomials in  $\mathbb{F}_{q^2}[x]$. We determine the orders and  the number of SCRIM polynomials of a given degree.

\begin{lem} \label{sameroot}
Let $\alpha$ be an element in an extension field of $\mathbb{F}_{q^2}$ and let $f(x)\in \mathbb{F}_{q^2}[x]$. Then $\alpha$ is a root of $f\left( x\right) $ if and only if  $\alpha^{-q}$ is a root of $f^{\dagger} (x)$.
\begin{proof}
Let $f(x)=a_0+a_1x+\dots+a_nx^n$. Then 
\begin{align}
f^\dagger(\alpha^{-q})
&=\alpha^{-qn}(a_0^q+\frac{a_1^q}{\alpha^{-q}}+\dots+\frac{a_n^q}{\alpha^{-qn}}) \notag\\
&=\alpha^{-qn}(a_0+a_1\alpha+\dots+a_n\alpha^n)^q  \notag\\
&=\alpha^{-qn}(f(\alpha))^q.\notag
\end{align}
Therefore, $\alpha$ is a root of $f\left( x\right) $ if and only if  $\alpha^{-q}$ is a root of $f^{\dagger} (x).$
\end{proof}
\end{lem}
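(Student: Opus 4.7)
The plan is to verify the claim by direct computation, substituting $\alpha^{-q}$ into $f^{\dagger}(x)$ and factoring the result so that $f(\alpha)^{q}$ appears explicitly. Since the Frobenius map $t \mapsto t^{q}$ is injective on any extension field of $\mathbb{F}_{q^2}$, vanishing of $f(\alpha)^q$ is equivalent to vanishing of $f(\alpha)$, which will give both directions of the equivalence at once.

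First I would unpack the definition of $f^{\dagger}$. Write $f(x)=\sum_{i=0}^{n}a_i x^i$ with $a_0\ne 0$. Then $f^{*}(x)=x^n a_0^{-1}f(1/x)=a_0^{-1}\sum_{i=0}^{n}a_i x^{n-i}$, and applying the conjugation map coefficient-wise gives
\begin{equation*}
f^{\dagger}(x)=\overline{f^{*}(x)}=a_0^{-q}\sum_{i=0}^{n}a_i^{\,q}\, x^{n-i}.
\end{equation*}
Note that the scalar $a_0^{-q}$ is nonzero, so it does not affect which elements are roots; I would therefore feel free to drop it when checking the root condition, which is essentially what the authors do in their calculation.

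Next I would evaluate at $x=\alpha^{-q}$. Substituting and pulling out the common factor $\alpha^{-qn}$, the exponents realign as
\begin{equation*}
f^{\dagger}(\alpha^{-q}) = a_0^{-q}\sum_{i=0}^{n}a_i^{\,q}\,\alpha^{-q(n-i)} = a_0^{-q}\,\alpha^{-qn}\sum_{i=0}^{n}a_i^{\,q}\,\alpha^{qi}.
\end{equation*}
The inner sum is precisely $\bigl(\sum_{i=0}^{n}a_i\alpha^i\bigr)^{q}=f(\alpha)^{q}$, using that Frobenius is a ring homomorphism on the ambient extension of $\mathbb{F}_{q^2}$. Hence $f^{\dagger}(\alpha^{-q})=a_0^{-q}\,\alpha^{-qn}\,f(\alpha)^{q}$.

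The final step is short: since $a_0\ne 0$ and $\alpha\ne 0$ (because $\alpha^{-q}$ is an element of the field, so $\alpha$ must be invertible), the prefactor $a_0^{-q}\alpha^{-qn}$ is a unit, and so $f^{\dagger}(\alpha^{-q})=0$ iff $f(\alpha)^{q}=0$ iff $f(\alpha)=0$. There is no real obstacle here; the only thing to be careful about is bookkeeping on indices and exponents, and the tacit use of $\alpha\ne 0$, which one should either note explicitly or justify from context (if $\alpha=0$ then $\alpha^{-q}$ is undefined, so the statement is vacuous in that case, and if $f(0)=0$ then $f^{\dagger}$ is not even defined under the authors' convention).
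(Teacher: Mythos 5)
Your proposal is correct and follows essentially the same route as the paper's own proof: evaluate $f^{\dagger}$ at $\alpha^{-q}$, factor out $\alpha^{-qn}$, and recognize the remaining sum as $f(\alpha)^q$ via the Frobenius homomorphism. You are in fact slightly more careful than the paper, since you retain the nonzero normalizing constant $a_0^{-q}$ coming from the definition of $f^{*}$ and you make explicit the tacit assumptions $\alpha\ne 0$ and $f(0)\ne 0$; the paper silently suppresses the constant, but this does not affect the conclusion.
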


Next lemma gives a necessary and sufficient condition for an irreducible polynomial to be SCRIM. By Remark {\ref{mini}}, it suffices to concentrate on  $M^{(i)}_{\mathbb{F}_{q^2}}(x)$.

\begin{lem} \label{thm -qi}
$M^{(i)}_{\mathbb{F}_{q^2}}(x)$ is self-conjugate-reciprocal if and only if $Cl_{q^2}(i)=Cl_{q^2}(-qi)$.

\begin{proof} Assume $M^{(i)}_{\mathbb{F}_{q^2}}(x)=M^{\dagger(i)}_{\mathbb{F}_{q^2}}(x)$.
Then $\alpha^i$ is a root of $M^{\dagger(i)}_{\mathbb{F}_{q^2}}(x)$.
Since $Cl_{q^2}(-qi)$ is a class corresponding to $M^{\dagger(i)}_{\mathbb{F}_{q^2}}(x)$, By Theorem {\ref{TT}},  $i\in Cl_{q^2}(-qi)$. 
Hence, \[Cl_{q^2}(i)=Ci_{q^2}(-qi).\]

Conversely, assume that $Cl_{q^2}(i)=Cl_{q^2}(-qi)$. Then
\begin{align*}
M^{(i)}_{\mathbb{F}_{q^2}}(x)&=\prod_{j\in Cl_{q^2}(i)}(x-\alpha^j) \\
& =\prod_{j\in Cl_{q^2}(-qi)}(x-\alpha^{j}) \\
&=\prod_{j\in Cl_{q^2}(i)}(x-\alpha^{-qj}).
\end{align*}
Since $\alpha^{-qj}$ is a root of $M^{(i)}_{\mathbb{F}_{q^2}}(x)$ for all $j\in Cl_{q^2}(i)$, it follows that $\alpha^j$ is a root of $M^{\dagger(i)}_{\mathbb{F}_{q^2}}(x)$ for all $j\in Cl_{q^2}(-qi)$.
Therefore, $M^{(i)}_{\mathbb{F}_{q^2}}(x)=M^{\dagger(i)}_{\mathbb{F}_{q^2}}(x)$ as desired. 
\end{proof}
\end{lem}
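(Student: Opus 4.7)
The plan is to use Lemma \ref{sameroot} together with the explicit factorization of minimal polynomials from Theorem \ref{TT}, and to exploit the fact that irreducible monic polynomials over $\mathbb{F}_{q^2}$ are determined by any one of their roots.

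For the forward direction, I would start by noting that $\alpha^i$ is a root of $M^{(i)}_{\mathbb{F}_{q^2}}(x)$ by Theorem \ref{TT}. If $M^{(i)}_{\mathbb{F}_{q^2}}(x)$ is self-conjugate-reciprocal, then $\alpha^i$ is also a root of $M^{(i)\dagger}_{\mathbb{F}_{q^2}}(x)$. Applying Lemma \ref{sameroot} in the reverse direction (with $f = M^{(i)}_{\mathbb{F}_{q^2}}$, whose conjugate-reciprocal has $\alpha^i$ as a root), I obtain that $\alpha^{-qi}$ is a root of $M^{(i)}_{\mathbb{F}_{q^2}}(x)$. By Theorem \ref{TT} applied to the unique irreducible monic polynomial having this prescribed set of roots, this forces $-qi \pmod{n}$ to lie in $Cl_{q^2}(i)$, so $Cl_{q^2}(-qi)=Cl_{q^2}(i)$.

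For the converse, I would first observe that $M^{(i)\dagger}_{\mathbb{F}_{q^2}}(x)$ is again monic and irreducible of the same degree as $M^{(i)}_{\mathbb{F}_{q^2}}(x)$, since taking reciprocals and conjugates preserves these properties and leading coefficient $1$. Then I would locate a common root: $\alpha^i$ is a root of $M^{(i)}_{\mathbb{F}_{q^2}}(x)$, so by Lemma \ref{sameroot}, $\alpha^{-qi}$ is a root of $M^{(i)\dagger}_{\mathbb{F}_{q^2}}(x)$. Using the assumption $Cl_{q^2}(-qi)=Cl_{q^2}(i)$ together with Theorem \ref{TT}, the element $\alpha^{-qi}$ is also a root of $M^{(i)}_{\mathbb{F}_{q^2}}(x)$. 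Since a monic irreducible polynomial over $\mathbb{F}_{q^2}$ is determined by any single root, both $M^{(i)}_{\mathbb{F}_{q^2}}(x)$ and $M^{(i)\dagger}_{\mathbb{F}_{q^2}}(x)$ coincide with the minimal polynomial of $\alpha^{-qi}$, yielding the equality.

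I expect the only subtle point to be bookkeeping the transition between roots and cyclotomic cosets cleanly, namely making sure that the exponent $-qi$ is always taken modulo $n = \mathrm{ord}(M^{(i)}_{\mathbb{F}_{q^2}}(x))$, since $\alpha$ is a primitive element of an ambient field and the relationship ``root $\longleftrightarrow$ coset representative'' only goes through once we reduce modulo this order. Everything else is a direct application of Theorem \ref{TT} and Lemma \ref{sameroot}; no intricate calculation is required.
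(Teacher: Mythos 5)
Your proof is correct and follows essentially the same route as the paper: both directions rest on Lemma \ref{sameroot} together with the root--coset correspondence of Theorem \ref{TT}, deducing $-qi \bmod n \in Cl_{q^2}(i)$ in the forward direction. The only cosmetic difference is in the converse, where you invoke irreducibility of $M^{\dagger(i)}_{\mathbb{F}_{q^2}}(x)$ and a single common root, while the paper reindexes the product over the coset to match the full root sets; both arguments are sound.
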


\begin{thm}
The degree of a SCRIM  polynomial must be odd.
\begin{proof}
Assume that $M^{(i)}_{\mathbb{F}_{q^2}}(x)$ has degree $t$. If $t=1$, then the degree of $M^{(i)}_{\mathbb{F}_{q^2}}(x)$ is odd.
Suppose $t\neq 1$. Then, by Lemma{ \ref{thm -qi}}, we have $Cl_{q^2}(i)=Cl_{q^2}(-qi)$ and $\vert Cl_{q^2}(i)\vert =t>1$. Then there exists $0\leq j<t$ such that
\begin{align*}
i&\equiv (-qi)q^{2j}\, (\mathrm{mod \,n}).
\end{align*}
It follows that
\begin{align*}
-qi&\equiv(-q)(-qi)q^{2j}\, (\mathrm{ mod \,n})  \\
&\equiv iq^{2j+2}\, (\mathrm{mod \,n}),
\end{align*}
and hence, 
\begin{align*}
i&\equiv iq^{2j+2}q^{2j}\, (\mathrm{mod \,n})\\
&\equiv iq^{2(2j+1)}\, (\mathrm{mod \,n}).
\end{align*}
Since $t= {\rm ord}_{{\rm ord}(i)}(q^2)$ is a divisor of  ${\rm ord}_{n}(q^2)$, it follows that
\begin{align*} 
t\vert (2j+1),
\end{align*} where  ${\rm ord}(i)$ denotes the additive order of $i$ in $\mathbb{Z}_n$.
Then  $2j+1=kt $ for some positive integer $k$.  Since $0\leq j<t$, we have $kt \leq 2j+1 <2t$. It follows that $k=1$, and hence, 
$t=2j+1$ which is odd.
\end{proof}
\end{thm}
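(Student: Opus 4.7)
My plan is to use Lemma~\ref{thm -qi} to convert the SCRIM hypothesis on $M^{(i)}_{\mathbb{F}_{q^2}}(x)$ into a statement about cyclotomic cosets, and then exploit the fact that the degree $t$ of $M^{(i)}_{\mathbb{F}_{q^2}}(x)$ equals $|Cl_{q^2}(i)|$, which is the multiplicative order of $q^2$ modulo $\mathrm{ord}(i)$ in $\mathbb{Z}_n$. The challenge is that the SCRIM condition involves an odd power of $q$ (namely $-qi$), while the cosets are generated by the even power $q^2$, so I need to square-up the relation in a controlled way.

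After disposing of the trivial case $t=1$, I would write $n = \mathrm{ord}(i)$ or work modulo $n$ directly. By Lemma~\ref{thm -qi}, $-qi \in Cl_{q^2}(i)$, so there exists $j$ with $0 \le j < t$ such that
\[
i \equiv -q^{2j+1} i \pmod{n}.
\]
The key move would be to apply the same shift again: multiplying this relation by $-q$ produces $-qi \equiv q^{2j+2} i \pmod n$, and then substituting this back into the original identity yields
\[
i \equiv (-qi)\,q^{2j} \equiv q^{2(2j+1)}\, i \pmod n.
\]
This exhibits $2j+1$ as a value of the exponent making $q^2$ act trivially, so by definition of $t$ as the order of $q^2$ modulo $\mathrm{ord}(i)$, we get $t \mid (2j+1)$.

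Finally I would pin down $j$ using the range constraint: since $0 \le j < t$ we have $1 \le 2j+1 < 2t$, and the only positive multiple of $t$ in this interval is $t$ itself, forcing $2j+1 = t$. Hence $t$ is odd.

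The main obstacle I anticipate is getting the bookkeeping right in the substitution step that turns the single factor of $q$ into the square $q^{2(2j+1)}$; one has to be careful that the signs cancel cleanly and that the resulting exponent is indeed an \emph{even} power of $q$, so that divisibility by $t = \mathrm{ord}_{\mathrm{ord}(i)}(q^2)$ can legitimately be invoked. Everything else is a short range argument.
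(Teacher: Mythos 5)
Your proposal is correct and follows essentially the same route as the paper: invoke Lemma~\ref{thm -qi} to get $i \equiv -q^{2j+1} i \pmod n$ for some $0 \le j < t$, multiply by $-q$ and substitute back to obtain $i \equiv q^{2(2j+1)} i \pmod n$, conclude $t \mid (2j+1)$ from $t = \mathrm{ord}_{\mathrm{ord}(i)}(q^2)$, and finish with the range argument forcing $2j+1 = t$. No substantive differences.
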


Next, we determine the number of SCRIM polynomials of degree $1$.

\begin{prop} \label{number1}
There are $q+1$ SCRIM polynomials of degree $1$ over $\mathbb{F}_{q^2}$.
\end{prop}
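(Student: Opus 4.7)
The plan is to handle the degree 1 case by direct computation rather than invoking Lemma \ref{thm -qi}. A monic polynomial of degree $1$ over $\mathbb{F}_{q^2}$ with nonzero constant term has the unique form $f(x) = x - a$ with $a \in \mathbb{F}_{q^2}^*$, and every such polynomial is automatically irreducible and monic. So the count of degree $1$ SCRIM polynomials equals the number of $a \in \mathbb{F}_{q^2}^*$ satisfying $f(x) = f^{\dagger}(x)$.

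First I would compute $f^{\dagger}(x)$ explicitly. By the definition of the reciprocal, $f^*(x) = x f(0)^{-1} f(1/x) = x(-a)^{-1}(1/x - a) = x - a^{-1}$. Applying conjugation coefficientwise then gives $f^{\dagger}(x) = \overline{f^*(x)} = x - (a^{-1})^q = x - a^{-q}$. Equating coefficients in $f = f^{\dagger}$ reduces the SCRIM condition to the single equation $a = a^{-q}$, equivalently $a^{q+1} = 1$.

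Finally, since $\mathbb{F}_{q^2}^*$ is cyclic of order $q^2 - 1 = (q-1)(q+1)$, the equation $a^{q+1} = 1$ has exactly $q+1$ solutions in $\mathbb{F}_{q^2}^*$, namely the unique subgroup of order $q+1$. Each such $a$ produces a distinct SCRIM polynomial $x - a$, giving the stated count of $q+1$. There is no substantive obstacle here; the only thing to be careful about is the sign bookkeeping when computing $f^*$, and to observe that $a \neq 0$ is automatic since we are looking at elements of $\mathbb{F}_{q^2}^*$.
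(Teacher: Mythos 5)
Your proof is correct and follows essentially the same route as the paper: write the monic degree-one polynomial with a parameter $a\in\mathbb{F}_{q^2}^*$, compute $f^{\dagger}$ directly to reduce the SCRIM condition to $a^{q+1}=1$, and count the $q+1$ solutions in the unique subgroup of that order of the cyclic group $\mathbb{F}_{q^2}^*$. The only cosmetic difference is your normalization $f(x)=x-a$ versus the paper's $f(x)=x+a$.
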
 
\begin{proof}
Let $f(x)$ be a polynomial over $\mathbb{F}_{q^2}$ of degree $1$. Then $f(x)=x+a$ for some $a \in \mathbb{F}_{q^2}$.
Thus  $f^{\dagger}(x)=x+a^{-q}$. The polynomial  $f(x)$ is  SCRIM if and only if  $a=a^{-q}$. Equivalently, $a^{q+1}=1$. 

Since $(q+1)\vert |\mathbb{F}^{*}_{q^2}|$ and $\mathbb{F}^{*}_{q^2}$ is a cyclic group, there exists a unique subgroup $H$ of order $q+1$ of $\mathbb{F}^{*}_{q^2}$. Clearly,  $a^{q+1}=1$ if and only if  $a\in H$. Hence, the number of SCRIM polynomials of degree $1$ over $\mathbb{F}_{q^2}$ is $q+1$.
\end{proof}

\begin{ex}
By proposition {\ref{number1}}, there are $6$ SCRIM polynomials of degree $1$ over $\mathbb{F}_{25}$. In order to list all of them, we assume that $\mathbb{F}^*_{25}= \langle \alpha \rangle$. It can be easily seen that $1^6=1=(\alpha^4)^6=(\alpha^8)^6=(\alpha^{12})^6=(\alpha^{16})^6=(\alpha^{20})^6$. 

Hence, all SCRIM polynomials of degree $1$ over $\mathbb{F}_{25}$ are   $x+1$, $x+\alpha^4$, $x+\alpha^8$,  $x+\alpha^{12}$, $x+\alpha^{16}$ and $x+\alpha^{20}$.
\end{ex}

From now on, we assume that the polynomials have odd  degree $n\geq 3$. We determine the number of SCRIM polynomials of degree $n\geq 3$ by using the orders of SCRIM polynomials of degree $n$ over $\mathbb{F}_{q^2}$. The following three lemmas are important tools for determining the order of SCRIM polynomial.
\begin{lem} 
[{\cite[Proposition 2]{YM2004}}] \label{divider}Suppose $a, r$ and $k$ are positive integers with r even. If $a$ divides $q^r-1$ and $a$ divides $q^k+1$, then $a$ divides $q^{r/{2^s}}+1$ for some positive integer $s$.
\end{lem}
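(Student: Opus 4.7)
The plan is to rephrase everything in terms of the multiplicative order $d := \mathrm{ord}_a(q)$. This order is well defined because $q^k+1 \equiv 1 \pmod{q}$, so $a \mid q^k+1$ forces $\gcd(a,q)=1$. The trivial cases $a \in \{1,2\}$ can be handled by hand (if $a=2$ and $q$ is odd, then $q^{r/2}$ is odd, so $s=1$ already works), so I would assume $a \geq 3$ throughout, which guarantees $-1 \not\equiv 1 \pmod{a}$.

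From the hypotheses I would extract $d \mid r$ (from $a \mid q^r - 1$) together with $d \mid 2k$ and $d \nmid k$ (from $q^k \equiv -1 \not\equiv 1 \pmod{a}$). A short 2-adic argument then pins down $v_2(d) = v_2(k)+1$: writing $d = 2^{\alpha} d'$ and $k = 2^{\beta} k'$ with $d',k'$ odd, the relation $d \mid 2k$ gives $d' \mid k'$ and $\alpha \leq \beta+1$, while $d \nmid k$ combined with $d' \mid k'$ forces $\alpha > \beta$. In particular $d$ is even; combined with $d \mid r$ this yields $1 \leq v_2(d) \leq v_2(r)$. Setting $s := v_2(r) - v_2(d) + 1$ then gives a positive integer with $s \leq v_2(r)$, so $r/2^s$ is a well-defined positive integer.

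The goal now reduces to showing $q^{r/2^s} \equiv -1 \pmod{a}$. The crux is the identification $q^{d/2} \equiv -1 \pmod a$: the cyclic group $\langle q \rangle \leq (\mathbb{Z}/a\mathbb{Z})^{\times}$ has order $d$ and contains $-1$ (witnessed by $q^k$); since $d$ is even, its unique element of order $2$ must be $q^{d/2}$, hence $q^{d/2} \equiv -1 \pmod a$. It therefore suffices to verify $r/2^s \equiv d/2 \pmod d$. Writing $r = 2^{v_2(r)} r_{\mathrm{odd}}$ and $d = 2^{v_2(d)} d_{\mathrm{odd}}$ and substituting the chosen $s$, this congruence reduces to $2 d_{\mathrm{odd}} \mid (r_{\mathrm{odd}} - d_{\mathrm{odd}})$; both $r_{\mathrm{odd}}$ and $d_{\mathrm{odd}}$ are odd, so their difference is even, and $d_{\mathrm{odd}} \mid r_{\mathrm{odd}}$ follows from $d \mid r$, so the required divisibility holds.

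The main obstacle is simply the 2-adic bookkeeping — selecting the correct exponent $s$ and tracking odd and even parts consistently — rather than any deeper structural difficulty; the essential insight is just that under these hypotheses $-1$ is forced to be $q^{d/2}$, which converts the problem into a congruence of integers.
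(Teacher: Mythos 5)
The paper does not prove this lemma at all: it is imported verbatim as \cite[Proposition 2]{YM2004}, so there is no in-paper argument to compare against. Your proof is correct and self-contained, and it is worth recording that every step checks out: $\gcd(a,q)=1$ follows from $a\mid q^k+1$, so $d=\mathrm{ord}_a(q)$ exists; for $a\geq 3$ the relations $d\mid 2k$, $d\nmid k$ do force $v_2(d)=v_2(k)+1\geq 1$, hence $d$ is even and $q^{d/2}\equiv -1\pmod a$ since $-1$ is the unique element of order $2$ in the cyclic group $\langle q\rangle$; and with $s=v_2(r)-v_2(d)+1$ one gets $r/2^s=2^{v_2(d)-1}r_{\mathrm{odd}}$, so $r/2^s-d/2=2^{v_2(d)-1}(r_{\mathrm{odd}}-d_{\mathrm{odd}})$ is divisible by $d=2^{v_2(d)}d_{\mathrm{odd}}$ exactly because $r_{\mathrm{odd}}-d_{\mathrm{odd}}$ is even and $d_{\mathrm{odd}}\mid r_{\mathrm{odd}}$. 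The edge cases $a\in\{1,2\}$ are disposed of correctly ($r$ even guarantees $r/2$ is an integer, and $a=2$ forces $q$ odd). This is essentially the same order-of-$q$ argument as in Yucas and Mullen's original proof, but your explicit choice of $s$ via $2$-adic valuations makes the conclusion constructive: it identifies precisely which $s$ works, namely the one with $v_2(r/2^s)=v_2(d)-1$.
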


\begin{lem}[{\cite[Proposition 1]{YM2004}}] \label{positive}
Let $a$  be a positive integers with $a>2$. If $m$ is the smallest positive integer such that $a$ divides $q^m+1$, then, for any positive integer $s$, the following statements hold.
\begin{enumerate}
\item $a$ divides $q^s+1$ if and only if $s$ is an odd multiple of $m$.
\item $a$ divides $q^s-1$ if and only if $s$ is an even multiple of $m$.
\end{enumerate} 
\end{lem}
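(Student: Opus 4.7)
The plan is to reduce both equivalences to computing the multiplicative order $d$ of $q$ modulo $a$; the central step is establishing $d=2m$ using the hypothesis $a>2$ together with the minimality of $m$.

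First I would observe that $a\mid q^m+1$ is equivalent to $q^m\equiv -1\pmod{a}$; squaring yields $q^{2m}\equiv 1\pmod{a}$, so $d\mid 2m$. Next I would rule out $d\mid m$: if it held, then $q^m\equiv 1\pmod{a}$ combined with $q^m\equiv -1\pmod{a}$ would force $a\mid 2$, contradicting $a>2$.

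To pin $d$ down exactly, I would work inside the cyclic subgroup $\langle q\rangle\subseteq(\mathbb{Z}/a\mathbb{Z})^{\ast}$. Because $a>2$, the residue class $-1$ satisfies $-1\not\equiv 1\pmod{a}$ and so has order $2$; moreover $-1=q^m$ lies in $\langle q\rangle$. In a cyclic group of order $d$, the unique element of order $2$ is $q^{d/2}$ (which forces $d$ to be even), hence $q^m=q^{d/2}$, i.e.\ $m\equiv d/2\pmod{d}$. By the minimality of $m$ this forces $m=d/2$, and therefore $d=2m$.

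With $d=2m$ in hand, both parts fall out of the cyclic structure of $\langle q\rangle$. For (2), $a\mid q^s-1$ iff $q^s\equiv 1\pmod{a}$ iff $d\mid s$ iff $s$ is an even multiple of $m$. For (1), $a\mid q^s+1$ iff $q^s\equiv -1=q^m\pmod{a}$ iff $s\equiv m\pmod{2m}$, i.e.\ $s=(2\ell+1)m$ for some integer $\ell\geq 0$, which is precisely the statement that $s$ is an odd multiple of $m$. I expect the only genuine obstacle to be the identification $d=2m$---specifically the careful use of $a>2$ to keep $-1$ distinct from $1$---after which everything else is routine bookkeeping modulo $d$.
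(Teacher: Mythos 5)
Your argument is correct. The paper itself gives no proof of this lemma --- it is quoted directly from Yucas and Mullen \cite[Proposition 1]{YM2004} --- so there is nothing internal to compare against; what you have written is a complete, self-contained replacement for the citation. The key step, identifying the multiplicative order $d$ of $q$ modulo $a$ as exactly $2m$, is handled properly: you use $a>2$ to ensure $-1$ has order $2$ in $(\mathbb{Z}/a\mathbb{Z})^{\ast}$, locate it as the unique involution $q^{d/2}$ in the cyclic group $\langle q\rangle$, and invoke the minimality of $m$ to conclude $m=d/2$. The only point left implicit is that $q$ is in fact a unit modulo $a$ (so that the order $d$ exists); this follows immediately from $a\mid q^m+1$, since any common prime divisor of $q$ and $a$ would divide both $q^m$ and $q^m+1$. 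With that one-line remark added, the proof is airtight, and the two final equivalences are, as you say, routine congruence bookkeeping modulo $d=2m$.
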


 Let $D_n$ be the set of all positive divisors of $q^n+1$ which do not divide $q^k+1$ for all $0\leq k <n$.

\begin{prop}\label{order}
Let $f(x)$ be a SCRIM polynomial of degree $n$ over $\mathbb{F}_{q^2}$. Then $\mathrm{ord}(f(x))\in D_n$. Moreover, if $\alpha \in \mathbb{F}_{q^{2n}}$ is a root of $f(x)$, then $\alpha$ is a primitive $d$-$th$ root of unity for some $d \in D_n$.
\end{prop}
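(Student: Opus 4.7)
The plan is to work with a root $\alpha$ of $f$, set $d=\mathrm{ord}(\alpha)$ (which equals $\mathrm{ord}(f(x))$ by Lemma \ref{rootorder}), and show $d\in D_n$. The SCRIM hypothesis will be converted, via Lemma \ref{sameroot}, into a divisibility of the form $d\mid q^s+1$, and then Lemma \ref{positive} will link this to $\mathrm{ord}_d(q^2)=n$ in order to pin down $d$.

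First I would note that, since $f$ is irreducible of degree $n\geq 3$, $\alpha\notin\mathbb{F}_{q^2}$, so $\alpha\neq\pm 1$ and $d>2$; this is the hypothesis needed to invoke Lemma \ref{positive} later. Next, using $f=f^\dagger$ and Lemma \ref{sameroot}, $\alpha^{-q}$ is also a root of $f$, hence is one of the Frobenius conjugates of $\alpha$: $\alpha^{-q}=\alpha^{q^{2j}}$ for some $0\leq j\leq n-1$. This yields $d\mid q^{2j}+q$. The case $j=0$ would force $d\mid q+1$ and hence $\alpha\in\mathbb{F}_{q^2}$, contradicting $n\geq 3$, so $j\geq 1$, and then $\gcd(d,q)=1$ lets me strip off the factor of $q$ to obtain $d\mid q^{2j-1}+1$.

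With a positive $s$ known to satisfy $d\mid q^s+1$, let $m$ be the smallest such $s$. By Lemma \ref{positive}, $\mathrm{ord}_d(q)$ is the smallest even multiple of $m$, i.e.\ $2m$, and therefore $\mathrm{ord}_d(q^2)=m$. But $\mathrm{ord}_d(q^2)$ is the degree of the minimal polynomial of $\alpha$ over $\mathbb{F}_{q^2}$, namely $n$, so $m=n$ and $d\mid q^n+1$. To finish, I would rule out $d\mid q^k+1$ for $0\leq k<n$: the case $k=0$ is impossible because $d>2$, and for $1\leq k<n$ Lemma \ref{positive} would require $n=m\mid k$, impossible. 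Hence $d\in D_n$, which also gives the ``moreover'' statement since $\alpha$ has order $d$.

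The main conceptual step, and the only one that is not mechanical, is the identification $m=n$. Producing some positive $s$ with $d\mid q^s+1$ is immediate from the SCRIM property, but recognising that the smallest such $s$ must coincide with $\mathrm{ord}_d(q^2)=n$ requires carefully matching Lemma \ref{positive}'s characterisation of $\mathrm{ord}_d(q)$ against the degree of the minimal polynomial of $\alpha$. Everything else is routine manipulation of exponents modulo $d$.
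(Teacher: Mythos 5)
Your proof is correct, and while it opens the same way as the paper's, the second half takes a genuinely different and arguably tighter route. Both arguments begin by applying Lemma \ref{sameroot} to write $\alpha^{-q}=\alpha^{q^{2j}}$ and strip the factor of $q$ to get $d\mid q^{2j-1}+1$ (you are more careful than the paper in ruling out $j=0$ and in noting $d>2$, both of which are needed). From there the paper invokes Lemma \ref{divider} together with $d\mid q^{2n}-1$ to get $d\mid q^{2n/2^s}+1$, uses the oddness of $n$ to force $s=1$ and hence $d\mid q^n+1$, and then disposes of the minimality of $n$ by citing ``arguments similar to those in the proof of Proposition 4'' of Yucas--Mullen. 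You instead let $m$ be the least positive integer with $d\mid q^m+1$, deduce $\mathrm{ord}_d(q)=2m$ and hence $\mathrm{ord}_d(q^2)=m$ from Lemma \ref{positive}, and identify $m$ with $n$ because $\mathrm{ord}_d(q^2)$ is the degree of the minimal polynomial of $\alpha$ over $\mathbb{F}_{q^2}$ (the fact from \cite[Theorem 2.47]{RH1997} that the paper itself uses later, in Theorem \ref{property}). This buys you two things: you avoid Lemma \ref{divider} entirely, and you replace the paper's external hand-wave for the step $t=n$ with a short self-contained degree argument; the minimality of $n$ among the $k$ with $d\mid q^k+1$ then falls out of Lemma \ref{positive} immediately. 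The only thing the paper's route buys in exchange is that it stays closer to the Yucas--Mullen template for SRIM polynomials.
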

\begin{proof}
Let $\alpha \in \mathbb{F}_{q^{2n}}$ be a root of $f(x)$. Since $f(x)$ is SCRIM, by Lemma \ref{sameroot}, $f(\dfrac{1}{\alpha^q})=0$ and 
we may write $\dfrac{1}{ \alpha^q}=\alpha^{q^{2t}}$ for some positive integer $t$. Then $\alpha^{q^{2t}+q}=1$ and thus $\mathrm{ord}(\alpha)$ divides $q^{2t}+q$. Since $\gcd(q,\mathrm{ord}(\alpha))=1$, we have $\mathrm{ord}(\alpha)\vert (q^{2t-1}+1)$. From $\alpha \in \mathbb{F}_{q^{2n}}$, then $\mathrm{ord}(\alpha)$ divides $q^{2n}-1$. By Lemma \ref{divider}, we have that $\mathrm{ord}(\alpha)$ divides $q^{2n/2^s}+1$ for some positive integer $s$. Since $n$ is odd, it follows that $s=1$. Then $\mathrm{ord}(\alpha)|(q^n+1)$.

Let $t$ be  the smallest  nonegative integer such that $\mathrm{ord}(\alpha)|(q^t+1)$.  Since $\deg (f(x))\geq 3$, we have $\mathrm{ord}(\alpha)\geq 3$, and hence, $t\geq 1$.   By Lemma \ref{positive},  $n$ is an odd multiple of $t$.  Using arguments similar to those in the proof of  {\cite[Proposition 4]{YM2004}}, we have  
$n=t$.  Therefore,  $\mathrm{ord}(\alpha)\nmid (q^k+1)$ for all $0\leq k <n$.  Hence, by Lemma {\ref{rootorder}}, $\mathrm{ord}(f(x))=\mathrm{ord}(\alpha)\in D_n$. From this, it can implies that $\alpha$ is a primitive $d$-$th$ root of unity for some $d \in D_n$.
\end{proof}

\begin{cor} \label{orderf}  
 Let $f(x)$ be a SCRIM polynomial of degree $n$ over $\mathbb{F}_{q^{2}}$. If $\alpha$ is a primitive element of $\mathbb{F}_{q^{2n}}$ and $\alpha ^j$ is a root of $f(x)$, then 
\begin{align*}
\mathrm{ord}(f(x))=\dfrac{q^{2n}-1}{\gcd(q^{2n}-1,j)}.
\end{align*}
\begin{proof} 
Let $\alpha$ be a primitive element of $\mathbb{F}_{q^{2n}}$ and let $\alpha ^j$ be  a root of $f(x)$. Then 
\begin{align*}
\mathrm{ord}(\alpha^j)=\dfrac{q^{2n}-1}{\gcd(q^{2n}-1,j)}.
\end{align*}
From Lemma \ref{rootorder}, we know that if $f(x)$ is irredeucible of degree $n$, then $\mathrm{ord}(f(x))$ is the order of any root of $f(x)$ in the multiplicative group $\mathbb{F}_{q^{2n}}^*$, so $\mathrm{ord}(\alpha^j)=\mathrm{ord}(f(x))$.
Hence, $\mathrm{ord}(f(x))=\mathrm{ord}(\alpha^j)=\dfrac{q^{2n}-1}{\gcd(q^{2n}-1,j)}$.
\end{proof}
\end{cor}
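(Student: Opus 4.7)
The plan is to reduce the statement to two standard facts and then combine them. First, since $f(x)$ is an irreducible polynomial of degree $n$ over $\mathbb{F}_{q^2}$, its splitting field is $\mathbb{F}_{(q^2)^n} = \mathbb{F}_{q^{2n}}$, so every root of $f(x)$ lies in $\mathbb{F}_{q^{2n}}^\ast$ and we may legitimately write a chosen root in the form $\alpha^j$ with $\alpha$ a primitive element of $\mathbb{F}_{q^{2n}}$. I would begin the proof by pointing this out explicitly (it also guarantees $\gcd(q^{2n}-1,j)$ is the right quantity to invoke).

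Next I would invoke Lemma \ref{rootorder}, which tells us that $\mathrm{ord}(f(x))$ equals the order of any root of $f(x)$ in $\mathbb{F}_{q^{2n}}^\ast$. In particular, $\mathrm{ord}(f(x)) = \mathrm{ord}(\alpha^j)$. This is the substantive ingredient; the rest is elementary group theory.

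Finally, I would use the standard formula for the order of a power of a generator in a cyclic group: in the cyclic group $\mathbb{F}_{q^{2n}}^\ast = \langle \alpha \rangle$ of order $q^{2n}-1$, the element $\alpha^j$ has order
\begin{align*}
\mathrm{ord}(\alpha^j) = \frac{q^{2n}-1}{\gcd(q^{2n}-1,\,j)}.
\end{align*}
Combining this with $\mathrm{ord}(f(x)) = \mathrm{ord}(\alpha^j)$ yields the claimed equality.

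There is no real obstacle here; the only point that requires mild care is checking that the hypothesis ``$\alpha^j$ is a root of $f(x)$'' is consistent, i.e.\ that such a $j$ exists, which follows from the splitting-field observation above. Note that the SCRIM hypothesis is not actually used in the argument — the statement is true for any irreducible polynomial of degree $n$ over $\mathbb{F}_{q^2}$ — but since the corollary is placed in the SCRIM section and will be applied only to SCRIM polynomials, I would simply state the proof under the given hypothesis without remarking on this generality.
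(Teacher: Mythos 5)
Your proposal is correct and follows essentially the same route as the paper: apply the standard cyclic-group formula $\mathrm{ord}(\alpha^j)=(q^{2n}-1)/\gcd(q^{2n}-1,j)$ and combine it with Lemma \ref{rootorder} to identify this with $\mathrm{ord}(f(x))$. Your additional remarks (that the root genuinely lies in $\mathbb{F}_{q^{2n}}^\ast$ and that the SCRIM hypothesis is not actually needed) are accurate but do not change the argument.
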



%
%

\begin{prop} \label{gcd}
If $d \in D_n$ and $\beta$ is a primitive $d$-$th$ root of unity, then the set $\{\beta, \beta^{q^2},..., \beta^{q^{2(n-1))}}\}$ is a collection of $n$ distinct primitive $d$-$th$ roots of unity.
\begin{proof}
Since $d \vert (q^n+1)$, we have $d\vert (q^{2n}-1)$. 
Let $0\leq i \leq n-1$. From $d\vert (q^{2n}-1)$, it follows that $\gcd(d,q^{2i})=1$ and $\beta^{q^{2i}}$ is a primitive $d$-$th$ root of unity. If $\beta^{q^{2i}}=\beta^{q^{2j}}$ for some $0\leq i < j \leq n-1,$ then $\beta^{q^{2i}-q^{2j}}=1$ so that $d$ divides $q^{2i}-q^{2j}=q^{2i}(q^{2(j-i)}-1)$. Since $\gcd(d,q^{2i})=1$, we see that $d$ divides $q^{2(j-i)}-1.$ Hence, by Lemma \ref{positive}, $2(j-i)=kn$ for some even positive integer $k$. But then $j=\dfrac{kn}{2}+i \geq n,$ a contradiction. Hence, $\beta^{q^{2i}}$'s are distinct.
\end{proof}
\end{prop}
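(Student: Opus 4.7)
The plan is to establish the two assertions of the proposition separately: that each $\beta^{q^{2i}}$ is a primitive $d$-th root of unity, and that the $n$ listed elements are pairwise distinct.

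For the first claim, I would observe that $d \in D_n$ implies $d \mid q^n + 1$, hence $d \mid q^{2n} - 1$, which forces $\gcd(d,q) = 1$ and therefore $\gcd(d, q^{2i}) = 1$ for every $0 \le i \le n-1$. Since $\beta$ has order exactly $d$, raising it to an exponent coprime to $d$ yields another element of order $d$, so each $\beta^{q^{2i}}$ is a primitive $d$-th root of unity.

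For distinctness I would argue by contradiction. Suppose $\beta^{q^{2i}} = \beta^{q^{2j}}$ for some $0 \le i < j \le n-1$. Then $d$ divides $q^{2j} - q^{2i} = q^{2i}(q^{2(j-i)} - 1)$, and by the coprimality just noted $d \mid q^{2(j-i)} - 1$. At this point I would invoke Lemma \ref{positive}(2), after verifying its two hypotheses: the condition $d > 2$ is automatic since $d \in D_n$ precludes $d \mid q^0 + 1 = 2$, while the requirement that $n$ be the smallest positive integer with $d \mid q^n + 1$ is exactly the defining property of $D_n$. The lemma then forces $2(j-i)$ to be an even positive multiple of $n$, so $2(j-i) \ge 2n$, giving $j - i \ge n$ and contradicting $j - i \le n-1$.

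The main subtle point---rather than a genuine obstacle---is confirming the hypotheses of Lemma \ref{positive}; once $d \ge 3$ and the minimality of $n$ are read off from membership in $D_n$, the argument concludes in a single step.
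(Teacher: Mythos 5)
Your proposal follows the paper's proof essentially step for step: the same coprimality argument for primitivity, the same contradiction via $d \mid q^{2(j-i)}-1$ and Lemma \ref{positive}(2). The only difference is that you explicitly verify the hypotheses of Lemma \ref{positive} ($d>2$ via $d\nmid q^0+1$, and the minimality of $n$ from the definition of $D_n$), which the paper leaves implicit; this is a small but welcome addition of rigor, not a different approach.
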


Let $d\in D_n$ and let $\beta$ be a primitive $d$-$th$ root of unity over $\mathbb{F}_{q^2}$. Define the polynomial $f_\beta(x)=\displaystyle \prod_{i=0}^{n-1}(x-\beta^{q^{2i}})$.

\begin{prop} \label{conjugate}
 $f_\beta(x)$ is a SCRIM polynomials  of degree $n$ and order $d$.
\begin{proof}
Using the definition of $f_\beta(x)$ and the fact that $n$ is odd, we have
\begin{align}\label{sss}
\notag  f_{\beta}^{\dagger}(x)&= \displaystyle \prod_{i=0}^{n-1}(x-\beta^{q^{2i}}) ^{\dagger} \\ \notag 
&=\prod_{i=0}^{n-1}x(-\beta^{q^{2i}})^{-q}( \dfrac{1}{x}-{\beta}^{q^{2i+1}})\\  \notag
&=\prod_{i=0}^{n-1}(-\beta^{-q^{2i+1}}) \prod_{i=0}^{n-1}(1-\beta ^{q^{2i+1}}x)\\  \notag
&=\prod_{i=0}^{n-1}(-\beta^{-q^{2i+1}}) \prod_{i=0}^{n-1}\beta^{q^{2i+1}}\prod_{i=0}^{n-1}(\beta^{-q^{2i+1}}-x)\\  
&= \prod_{i=0}^{n-1}(x-\beta^{-q^{2i+1}}). 
\end{align}
We claim that $\{\beta^{q^{2j}}\mid 0\leq j\leq n-1\}=\{\beta^{-q^{2i+1}}\mid 0\leq i\leq n-1\}$. 

Let $\beta^{-q^{2s+1}} \in \{\beta^{-q^{2i+1}}\mid 0\leq i\leq n-1\}.$  Then
\begin{align*}
\beta^{-q^{2s+1}}&=\beta^{q^{2s}(-q)} =(\beta^{-q})^{q^{2s}} =({\beta^{q^{n+1}}})^{q^{2s}} ={\beta^{q^{n+1+2s}}}.
\end{align*}
Since $n$ is odd, we have  $\beta^{-q^{2s+1}}=\beta^{q^{2l}}$   for some  $0\leq l\leq n-1.$
Hence, $\beta^{-q^{2s+1}} \in \{\beta^{q^{2j}}\mid 0\leq j\leq n-1\}.$

Let  $\beta^{q^{2i}} \in \{\beta^{q^{2j}}\mid 0\leq j\leq n-1\} $. Since $n$ is odd, we have 
\begin{align*}
\beta^{q^{2i}}&= {\beta^{q^{n+1+2s}}}   = ({\beta^{q^{n+1}}})^{q^{2s}} = (\beta^{-q})^{q^{2s}} = \beta^{q^{2s}(-q)}
\end{align*}
for some  $0\leq s\leq n-1$.
Hence, $\beta^{q^{2i}} \in \{\beta^{-q^{2i+1}}\mid 0\leq i\leq n-1\}$. Therefore, $\{\beta^{q^{2j}}\mid 0\leq j\leq n-1\}=\{\beta^{-q^{2i+1}}\mid 0\leq i\leq n-1\}$ as desired.

From (\ref{sss})  and the fact that $\{\beta^{q^{2j}}\mid 0\leq j\leq n-1\}=\{\beta^{-q^{2i+1}}\mid 0\leq i\leq n-1\}$, we have
\begin{align*}
f_{\beta}^{\dagger}(x)
&= \prod_{i=0}^{n-1}(x-\beta^{-q^{2i+1}})\\
&= \prod_{j=0}^{n-1}(x-\beta^{q^{2j}})\\
&= f_\beta(x).
\end{align*}

Suppose that $f_\beta(x)$ is   witten as $f_\beta(x)=g(x)h(x)$, where $g(x)$ is  an irreducible monic polynomial of degree $r$ and $h(x)$ is  a monic polynomial of degree $n-r$. Let $\alpha$ be a root of $g(x)$.
Then
\begin{align*}
\alpha^{q^{2r}-1}=1.
\end{align*}
Since $\alpha$ is a root of $f_\beta (x)$, $\alpha$ is a $d$-$th$-root of unity.
Hence,
\begin{align*}
d|(q^{2r}-1).
\end{align*} 
Since $d$  divides $q^n+1$, by Lemma {\ref{positive}}, $2r$ is an even multiple of $n$. Since $r\leq n$, we have $r=n$ and $f_\beta(x)=g(x)$ is irreducible.
\end{proof}
\end{prop}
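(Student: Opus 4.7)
The plan is to establish, in sequence, three properties of $f_\beta(x)$: that it is self-conjugate-reciprocal, that it is irreducible, and that its order equals $d$. Monicity and degree $n$ are immediate from the factored definition together with Proposition \ref{gcd}, which guarantees that the $n$ listed roots are distinct.

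For self-conjugate-reciprocality, I would compute $f_\beta^\dagger(x)$ directly. Starting from $f_\beta(x)=\prod_{i=0}^{n-1}(x-\beta^{q^{2i}})$, applying the reciprocal and then the conjugation operations (and pulling out the resulting scalar factors) produces $f_\beta^\dagger(x)=\prod_{i=0}^{n-1}(x-\beta^{-q^{2i+1}})$. The core step is then the set equality
\[
\{\beta^{-q^{2i+1}} : 0\le i\le n-1\}=\{\beta^{q^{2j}} : 0\le j\le n-1\}.
\]
The hypothesis $d\in D_n$ gives $d\mid q^n+1$, so $\beta^{q^n+1}=1$ and hence $\beta^{-q}=\beta^{q^n}$. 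Substituting, $\beta^{-q^{2s+1}}=(\beta^{-q})^{q^{2s}}=\beta^{q^{n+1+2s}}$, and since $n$ is odd the exponent $n+1+2s$ is even. Thus each element of the left-hand set has the form $\beta^{q^{2\ell}}$ for some $\ell$; the reverse inclusion is symmetric. Combined with Proposition \ref{gcd}, both sets have $n$ distinct elements, which yields the equality and hence $f_\beta^\dagger=f_\beta$.

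For irreducibility, I would argue by contradiction: suppose $f_\beta=gh$ with $g$ monic irreducible of degree $r\le n$, and let $\alpha$ be a root of $g$. Then $\alpha\in\mathbb{F}_{q^{2r}}^{*}$, so $\mathrm{ord}(\alpha)\mid q^{2r}-1$. But $\alpha$ is also a root of $f_\beta$, hence a primitive $d$-th root of unity by Proposition \ref{gcd}, giving $d\mid q^{2r}-1$. Since $d\in D_n$, the minimality built into the definition of $D_n$ together with Lemma \ref{positive} forces $2r$ to be an even multiple of $n$, and combining with $r\le n$ yields $r=n$. Therefore $f_\beta=g$ is irreducible, and Lemma \ref{rootorder} gives $\mathrm{ord}(f_\beta)=\mathrm{ord}(\beta)=d$.

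The main obstacle will be the set-equality step in the self-conjugate-reciprocal verification: one must deploy $d\mid q^n+1$ and the parity of $n$ in tandem to see that every exponent of the form $-q^{2i+1}$ reduces, modulo $d$, to one of the form $q^{2j}$, without any coincidence collapsing the cardinality below $n$. The remaining parts of the proof are essentially direct appeals to Proposition \ref{gcd}, Lemma \ref{positive}, and Lemma \ref{rootorder}.
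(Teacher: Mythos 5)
Your proposal is correct and follows essentially the same route as the paper: compute $f_\beta^\dagger(x)=\prod_{i=0}^{n-1}(x-\beta^{-q^{2i+1}})$, use $d\mid q^n+1$ and the oddness of $n$ to identify the root sets $\{\beta^{-q^{2i+1}}\}$ and $\{\beta^{q^{2j}}\}$, and then rule out a proper irreducible factor of degree $r<n$ via $d\mid q^{2r}-1$ and Lemma \ref{positive}. The only blemish is the line ``$\beta^{-q}=\beta^{q^{n}}$,'' which should read $\beta^{-q}=\beta^{q^{n+1}}$ (equivalently $\beta^{-1}=\beta^{q^n}$); your subsequent computation $\beta^{-q^{2s+1}}=\beta^{q^{n+1+2s}}$ already uses the correct identity, so nothing downstream is affected.
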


 The construction of a SCRIM polynomial $f_{\beta}(x)$ can be illustrated  as follows. 

\begin{ex}\label{ex} Let $n=3$ and $q=3$. Then $D_3=\{7,\,14,\,28\}$.  Assume that $\mathbb{F}_{729}^*=\langle \alpha \rangle$.  Since the set $\{ \alpha^{52},\,  \alpha^{468}, \,\alpha^{572}\}$ is a collection of $3$ distinct primitive $14$-$th$ roots of unity, it follows that
\begin{align*}
f_{\alpha^{52}}(x)= f_{\alpha^{468}}(x)= f_{\alpha^{572}}(x)=(x-\alpha^{52})(x-\alpha^{468})(x-\alpha^{572}).
\end{align*}
By Theorem {\ref{conjugate}}, $f_{\alpha^{52}}(x)$ is a SCRIM polynomial.                                             
\end{ex}

\begin{lem} [{\cite[Theorem 2.45]{RH1997}}]\label{cha}
Let $\mathbb{F}$ be a field of characteristic $p$ and let  $n$  be a positive integer not divisible by $p$. Let $\zeta$ be a primitive $d$-$th$ root of unity over $\mathbb{F}$. Then 
\begin{align}
x^n-1=\prod_{d\vert n}Q_d(x),
\end{align}
where 
$
Q_d(x)= \displaystyle\prod_{s=1 ,\ \gcd(s,n)=1}^n(x-\zeta ^s ).
$\end{lem}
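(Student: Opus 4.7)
The plan is to establish the identity by partitioning the set of $n$-th roots of unity in a splitting field of $x^n-1$ according to their exact multiplicative orders, then identifying each class with a $Q_d$.

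First, I would observe that because $\gcd(n,p)=1$, the derivative of $x^n - 1$ is $n x^{n-1}$, which shares no root with $x^n-1$; hence $x^n-1$ is separable, and in a splitting field $E$ over $\mathbb{F}$ it has exactly $n$ distinct roots. These roots form a finite subgroup $U_n \subseteq E^{*}$ of order $n$, which is cyclic because every finite subgroup of the multiplicative group of a field is cyclic. Over $E$, one then has the unambiguous factorization
\begin{equation*}
x^n - 1 \;=\; \prod_{\omega \in U_n}(x - \omega).
\end{equation*}

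Next, I would sort the elements of $U_n$ by exact order. Every $\omega \in U_n$ satisfies $\omega^n = 1$, so its order is some divisor $d$ of $n$; conversely, any divisor $d$ of $n$ corresponds to the unique subgroup of $U_n$ of order $d$, whose generators are precisely the primitive $d$-th roots of unity. Fixing such a generator $\zeta$ of this order-$d$ subgroup, the elements of order exactly $d$ are
\begin{equation*}
\{\, \zeta^s : 1 \le s \le d,\ \gcd(s,d) = 1 \,\},
\end{equation*}
a set of cardinality $\varphi(d)$. This decomposes $U_n$ as the disjoint union, over $d \mid n$, of the primitive $d$-th roots of unity.

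Finally, grouping the linear factors accordingly gives
\begin{equation*}
x^n - 1 \;=\; \prod_{d \mid n} \ \prod_{\substack{\omega \in U_n \\ \operatorname{ord}(\omega) = d}} (x - \omega) \;=\; \prod_{d \mid n} Q_d(x),
\end{equation*}
which is the claimed identity. The only step that requires real care is the partition argument: I would make sure to verify that the primitive $d$-th roots live in $E$ (they do, because the cyclic group $U_n$ contains a subgroup of every order dividing $n$) and to note the count $\varphi(d)$ via the standard cyclic-group fact that $\sum_{d \mid n}\varphi(d) = n$, which serves as a sanity check that every root of $x^n-1$ is accounted for exactly once. Everything else reduces to elementary group theory together with separability.
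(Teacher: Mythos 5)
Your proof is correct and complete. The paper itself gives no proof of this lemma---it is quoted verbatim (with a citation) from Lidl and Niederreiter, so there is nothing internal to compare against; your argument is the standard one (separability of $x^n-1$ from $\gcd(n,p)=1$, cyclicity of the group of roots, and partition of the roots by exact order), which is essentially the proof in the cited reference. One small point worth noting: the statement as printed in the paper has a typo in the definition of $Q_d(x)$ (the product should run over $1\le s\le d$ with $\gcd(s,d)=1$, not $\gcd(s,n)=1$ up to $n$); your proof implicitly works with the corrected definition, which is the one the paper actually uses in the remark that follows the lemma.
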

Note that $Q_d(x)$ can be viewed as 
\begin{align*}
Q_d(x)= \displaystyle\prod_{\eta \in D}(x-\eta ),
\end{align*}
where $ D$ is the set of all primitive $ d$-$th$ roots of unity over  $\mathbb{F}$.

\begin{thm} \label{property}
Let $f(x)$ be  an irreducible monic polynomial of degree $n$ over $\mathbb{F}_{q^2}$. Then the following statements are equivalent:
\begin{enumerate}[$1)$]
\item $f(x)$ is self-conjugate-reciprocal.
\item $\mathrm{ord}(f(x)) \in D_n$.
\item $f(x)=f_\beta(x)$ for some primitive d-th root of unity $\beta$ with $d \in D_n$.
\end{enumerate}
\end{thm}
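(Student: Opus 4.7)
The plan is to prove the theorem by establishing the cyclic chain of implications $1) \Rightarrow 2) \Rightarrow 3) \Rightarrow 1)$, since most of the technical groundwork has already been carried out in the preceding propositions.

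For $1) \Rightarrow 2)$, I would simply invoke Proposition \ref{order}, which asserts precisely that the order of any SCRIM polynomial of degree $n$ lies in $D_n$. Dually, for $3) \Rightarrow 1)$, I would invoke Proposition \ref{conjugate}, which already establishes that $f_\beta(x)$ is SCRIM whenever $\beta$ is a primitive $d$-th root of unity with $d \in D_n$. Both of these are one-line citations.

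The substantive content therefore lies in $2) \Rightarrow 3)$, and my strategy is as follows. Let $d = \mathrm{ord}(f(x)) \in D_n$ and fix any root $\beta$ of $f(x)$ in an extension of $\mathbb{F}_{q^2}$. By Lemma \ref{rootorder}, the multiplicative order of $\beta$ equals $d$, so $\beta$ is a primitive $d$-th root of unity. Since $f(x)$ is irreducible of degree $n$ over $\mathbb{F}_{q^2}$, its set of roots is precisely the Galois orbit
\begin{equation*}
\{\beta, \beta^{q^2}, \dots, \beta^{q^{2(n-1)}}\}
\end{equation*}
of $\beta$ under the Frobenius of $\mathbb{F}_{q^2}$. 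Proposition \ref{gcd} guarantees that these $n$ powers are pairwise distinct, so
\begin{equation*}
f(x) = \prod_{i=0}^{n-1}\bigl(x - \beta^{q^{2i}}\bigr) = f_\beta(x),
\end{equation*}
which is exactly statement $3)$ with this choice of $\beta$ and $d$.

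I do not anticipate a real obstacle: each implication follows directly from a result already established, and the theorem is best understood as a convenient repackaging of Propositions \ref{order}, \ref{gcd}, and \ref{conjugate} into a single three-way characterization. The only point that requires a brief justification beyond citation is the identification of the root set of the irreducible polynomial $f(x)$ with the Frobenius orbit of a single root, which is standard Galois theory over finite fields.
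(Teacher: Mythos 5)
Your proof is correct, and the overall decomposition matches the paper's: both reduce the theorem to the implication $2)\Rightarrow 3)$ and dispose of the other two by citing earlier results (the paper cites Corollary \ref{orderf} and Proposition \ref{conjugate}; your citation of Proposition \ref{order} for $1)\Rightarrow 2)$ is if anything the more apt reference). Where you genuinely diverge is in the proof of $2)\Rightarrow 3)$. The paper takes a detour through cyclotomic polynomials: it factors $x^{\mathrm{ord}(f(x))}-1=\prod_{\ell\mid\mathrm{ord}(f(x))}Q_\ell(x)$, deduces $f(x)\mid Q_d(x)$ for some divisor $d$ of $\mathrm{ord}(f(x))$, must then argue separately that this $d$ lies in $D_n$, and only obtains a primitive $d$-th root of unity $\delta$ with minimal polynomial $f(x)$ at the end; notably, the paper does not immediately identify $d$ with $\mathrm{ord}(f(x))$, and has to return to this point in the proof of the subsequent counting theorem. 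Your argument is more direct: Lemma \ref{rootorder} says any root $\beta$ of $f(x)$ has multiplicative order exactly $d=\mathrm{ord}(f(x))\in D_n$, so $\beta$ is already a primitive $d$-th root of unity, and the standard fact that the roots of an irreducible polynomial over $\mathbb{F}_{q^2}$ form the Frobenius orbit of any one root gives $f(x)=\prod_{i=0}^{n-1}(x-\beta^{q^{2i}})=f_\beta(x)$ at once. What your route buys is brevity and the immediate identification $d=\mathrm{ord}(f_\beta(x))$; what the paper's route buys is the explicit relation $f(x)\mid Q_d(x)$, which it reuses later. Both are sound; your appeal to Proposition \ref{gcd} for distinctness of the conjugates is not even strictly needed, since an irreducible polynomial of degree $n$ over a finite field automatically has $n$ distinct roots.
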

\begin{proof} By Corollary \ref{orderf} and Proposition \ref{conjugate}, it remains to prove $2)$ implies $3)$. Assume $\mathrm{ord}(f(x))\in D_n$. Let $p$ be the characteristic of $\mathbb{F}_{q^2}$. Since $\gcd(p, \mathrm{ord}(f(x)))=1$, by Lemma \ref{cha}, we have \[x^{\mathrm{ord}(f(x))}-1=\displaystyle \prod_{\ell \vert \mathrm{ord}(f(x))}Q_\ell(x).\] Since $f(x)\vert (x^{\mathrm{ord}(f(x))}-1)$, we have $f(x)\vert Q_d(x)$ for some divisor $d$ of $\mathrm{ord}(f(x))$. Then $d\vert (q^n+1)$.

We claim that $d\in D_n$. Suppose $d\vert (q^k+1)$ for some $k<n$. Then $d\vert (q^{2k}-1)$, ${\it i.e.}$, $q^{2k}\equiv 1\, (\mathrm{mod} \, d)$. From {\cite[Theorem 2.47]{RH1997}},  $n$ is the smallest positive integer such that $q^{2n}\equiv 1\,(\mathrm{mod}\,d)$. Since $k<n$, we have a contradiction. Therefore, $d\in D_n$.

Let $\gamma$ be a primitive $d$-$th$ root of unity over $\mathbb{F}_{q^2}.$ Since $q^{2n}\equiv 1\,(\mathrm{mod} \,d)$ and $q^{2k}\not\equiv 1 \, (\mathrm{mod} \,d)$, for all $0 \leq k<n$, it follows that $\gamma \in \mathbb{F}_{q^{2n}}$ but $\gamma \not\in \mathbb{F}_{q^{2k}}$ for all $0\leq k <n$. Then the minimal polynomial of $\gamma$ has degree $n$. Since $f(x)$ is irreducible  and $f(x)\vert Q_d(x)$, there exists a primitive $d$-$th$ root of unity $\delta$ such that its minimal polynomial equals $f(x)$. 

Finally, we show that $f(x)=f_\delta (x)$. Since  $f_\delta (x)$ and $f(x)$ are monic irreducible polynomials of the same degree $n$ and $\delta$ is a root of $f_\delta (x)$,  we have $f(x)=f_\delta (x)$.
\end{proof}


In the next theorem, we determine the number of SCRIM polynomials of a given degree. 

\begin{thm} Let $n\geq 3$ be an odd positive integer. Then following statements hold. 
\begin{enumerate}[$1)$]
\item For each $d\in D_n$, there are $\dfrac{\phi(d)}{n}$ SCRIM polynomials over $\mathbb{F}_{q^2}$ of degree $n$ and order $d$.
\item The number of SCRIM polynomials over $\mathbb{F}_{q^2}$ of degree $n$ is \[\dfrac{1}{n} \displaystyle\sum_{d\epsilon D_n}\phi(d).\]
\end{enumerate}
\end{thm}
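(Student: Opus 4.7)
The plan is to deduce Part 2) almost immediately from Part 1), so essentially all the work is in Part 1): counting SCRIM polynomials of degree $n$ and prescribed order $d \in D_n$. To set this up, I would first appeal to Theorem \ref{property} to identify the set $S_d$ of SCRIM polynomials of degree $n$ and order $d$ with the image of the map $\beta \mapsto f_\beta(x)$ defined on $P_d$, the set of primitive $d$-th roots of unity over $\mathbb{F}_{q^2}$. Since $|P_d| = \phi(d)$, the claim $|S_d| = \phi(d)/n$ reduces to proving that every fiber of this map has exactly $n$ elements.

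The fiber analysis proceeds by identifying the preimage of $f_\beta(x)$ with the orbit of $\beta$ under $\zeta \mapsto \zeta^{q^2}$. In one direction, Proposition \ref{gcd} gives that $\{\beta^{q^{2i}} : 0 \le i \le n-1\}$ consists of $n$ distinct primitive $d$-th roots of unity; each $\gamma = \beta^{q^{2i}}$ satisfies $f_\gamma(x) = f_\beta(x)$ because the defining product $\prod_{j=0}^{n-1}(x - \gamma^{q^{2j}})$ is just a cyclic reindexing of $\prod_{i=0}^{n-1}(x - \beta^{q^{2i}})$. In the other direction, if $f_\gamma(x) = f_\beta(x)$, then $\gamma$, being a root of $f_\gamma(x)$, must be one of the $n$ roots of $f_\beta(x)$, so $\gamma = \beta^{q^{2i}}$ for some $i$. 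Thus the fiber has cardinality exactly $n$ and $|S_d| = \phi(d)/n$.

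For Part 2), I would invoke Theorem \ref{property} once more to observe that every SCRIM polynomial of degree $n \ge 3$ has order lying in $D_n$, hence the collection of all SCRIM polynomials of degree $n$ decomposes as the disjoint union $\bigsqcup_{d \in D_n} S_d$. Summing the formula from Part 1) over $d \in D_n$ then yields
\[
\frac{1}{n}\sum_{d \in D_n}\phi(d),
\]
as claimed.

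The main step to be careful about is the fiber count, but this is not truly an obstacle: Proposition \ref{gcd} already guarantees that the Frobenius-squared orbit of any $\beta \in P_d$ has size $n$, and the equality of fibers with orbits is forced simply by the fact that $f_\beta(x)$ and $f_\gamma(x)$ are monic of degree $n$ and share a root precisely when they coincide. Everything else is bookkeeping.
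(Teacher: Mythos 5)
Your proposal is correct and follows essentially the same route as the paper: count the $\phi(d)$ primitive $d$-th roots of unity, divide by $n$ using the size-$n$ Frobenius-squared orbits from Proposition \ref{gcd}, and sum over $d\in D_n$ via the equivalence in Theorem \ref{property}. In fact your explicit fiber argument (that $f_\gamma=f_\beta$ exactly when $\gamma$ lies in the orbit $\{\beta^{q^{2i}}\}$) supplies the justification for dividing by $n$ that the paper's proof leaves implicit.
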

\begin{proof}
For each $d\in D_n$, there are $\phi(d)$ primitive $d$-$th$ root of unity. For each primitive $d$-$th$ root of unity $\beta$, $f_\beta(x)$ has degree $n$ by \cite[Theorem 2.47]{RH1997}. Therefore, there are $\displaystyle \frac{\phi(d)}{n}$ SCRIM polynomials over $\mathbb{F}_{q^2}$ of degree $n$ and order $d$. Hence, $1)$ is proved.

Next, we show that $d=\mathrm{ord}(f_\beta(x))$. From the proof of Theorem {\ref{property}}, we know $d\leq \mathrm{ord}(f_\beta(x))$. Since $f_\beta(x)\vert Q_d(x)$, we have $f_{\beta}(x)\vert (x^d-1)$. It follows that $\mathrm{ord}(f_\beta(x))\leq d$. Hence,   $d=\mathrm{ord}(f_\beta(x))$.

The statement $2)$ follows  from $1)$ and the equivalence of the statements  $1)$ and $ 2)$ in Theorem {\ref{property}}.
\end{proof}

\begin{ex} \label{ex13}
Let $q=3$ and $n=3$. Then $D_3=\{7, \,14, \, 28\}$. Let $\alpha$ be defined as in Example {\ref{ex}}. Then, we have the following properties.
\begin{enumerate}[$1)$]
\item If $d=7$, there are $2$ SCRIM polynomials over $\mathbb{F}_{3^2}$ of degree $3$ and order $7$  which are $x^3 + a^3x^2 + a^5x + 2$, and $x^3 + ax^2 + a^7x + 2$.
\item If $d=14$, there are $2$ SCRIM polynomials over $\mathbb{F}_{3^2}$ of degree $3$ and order $14$  which are $x^3 + a^5x^2 + a^7x + 1$ and $x^3 + a^7x^2 + a^5x + 1$.
\item If $d=28$, there are $4$ SCRIM polynomials over $\mathbb{F}_{3^2}$ of degree $3$ and order $28$  which are  
$x^3 + ax^2 + ax + a^6$, $x^3 + a^3x^2 + a^3x + a^2$, $x^3 + a^5x^2 + ax + a^2$ and $x^3 + a^7x^2 + a^3x + a^6$.
\end{enumerate}
\end{ex}

Table $3.1$ displays the number of   SCRIM polynomials of   degree $n=1,3,7,\dots, 15$ over $\mathbb{F}_{q^2}$, where $q=2,3,5$.

\begin{table}[!htb]
\begin{center}
\begin{tabular}{|c|c|c|c|}
\hline
$q$&$n$& The number of SCRIM polynomials of degree $n$ over $\mathbb{F}_{q^2}$ \\ \hline
2&1&3\\
&3&2 \\
&5&6 \\

&7&18 \\

&9&56\\
&11 &186\\ 
&13&630 \\
&15&2182 \\\hline
3&1&4\\
&3&8\\
&5&48 \\
&7&312 \\
&9&2184\\
&11&16104\\ 
&13&122640\\
&15&956576 \\\hline
5&1&6\\
&3&40 \\
&5&624 \\
&7&1160 \\
&9&217000\\
&11&4438920\\ 
&13&93900240\\
&15&2034504992 \\\hline

\end{tabular}
\caption{The number of SCRIM polynomials of a given degree over $\mathbb{F}_{q^2}$.}
\end{center}
\end{table}

The order of SCRIM polynomials of degree  $n=11$ over $\mathbb{F}_4$ are listed in Table $3.2$ together with the number of SCRIM polynomials of degree  $n=11$ over $\mathbb{F}_4$  of a given order.

\begin{table}[!htb]
\begin{center}
\begin{tabular}{|c|c|}
\hline
Order & The number of SCRIM polynmials of each order \\ \hline
99&4\\
331&22\\
993&44\\
2979&132\\
3641&220\\
10928&440\\
32769&1320\\   \hline
Total& 2182 \\ \hline
\end{tabular}
\caption{The number of SCRIM polynomials of degree $15$ over $\mathbb{F}_4$.}
\end{center}
\end{table}

\section*{Acknowledgements}
  The authors would like to thank the anonymous referees for helpful comments and
suggestions.

\end{document}